\DeclarePairedDelimiter \floor {\lfloor} {\rfloor}
\newtheorem{theorem}{Theorem}[section]
\newtheorem{corollary}{Corollary}
\newtheorem*{theorem*}{Theorem}
\newtheorem*{remark*}{Remark}
\newtheorem*{problem*}{Problem}
\newtheorem*{conjecture*}{Conjecture}
\newtheorem*{question*}{Question}
\newtheorem{lemma}[theorem]{Lemma}
\newtheorem{proposition}[theorem]{Proposition}
\newcommand{\rS}{\mathcal{S}}
\begin{document}
\title[Prime distribution in certain sequences]{How often are $\floor{n^{\alpha}}$ and $\floor{n^{\beta}}$ simultaneously primes?}

 \author[Anup B. Dixit, Nikhil S. Kumar]{Anup B. Dixit, Nikhil S. Kumar}


\address[Anup B. Dixit]{Institute of Mathematical Sciences, A CI of Homi Bhabha National Institute, 4th cross street, CIT Campus, Taramani, Chennai, 600113}
\email{anupdixit@imsc.res.in}

\address[Nikhil S. Kumar]{Department of Mechanical Engineering, National Institute of Technology, Tiruchirappalli, Tamilnadu, 620015}
\email{nikhil1729kumar@gmail.com}

\date{\today}

\subjclass[2020]{11A41, 11B50, 11J71}

\keywords{uniform distribution, exponential sums, prime distribution}

\thanks{The research of the first author is partially supported by an Inspire Faculty fellowship. \\ The authors have no conflicts of interest to declare that are relevant to the content of this article.\\ Data sharing is not applicable to this article as no datasets were generated or analysed during the current study.}

\begin{abstract}
Let $\floor{x}$ denote the greatest integer less than or equal to a real number $x$. Given real numbers $0<\alpha_1 < \alpha_2 < \cdots< \alpha_k < 1$ satisfying a certain condition, we show that there are infinitely many positive integers $n$ for which all of $\floor{n^{\alpha_1}}, \floor{n^{\alpha_2}},\ldots, \floor{n^{\alpha_k}}$ are prime numbers. Our approach relies on establishing a simultaneous equidistribution theorem for $\floor{n^{\alpha_i}}$ across $k$-many arithmetic progressions.
\end{abstract}
\maketitle

\section{\bf Introduction}
\bigskip

In his 1912 address at the International Congress of Mathematics, E. Landau proposed four unsolved problems in number theory. The fourth problem was to show that there are infinitely many prime numbers of the form $n^2 +1$, where $n$ is a positive integer. This problem is a special case of the Bunyakovsky conjecture for general polynomials and remains widely open. A natural variant of this problem is to consider the primality of expressions involving real powers of integers. For a real number $x$, let $\floor{x}$ denote the greatest integer $\leq x$. Given a real number $\alpha>0$, one can ask whether  $\floor{n^\alpha}+1$ is a prime infinitely often. When $\alpha=2$, this is precisely Landau's problem. For $0<\alpha\leq 1$, the answer follows from the infinitude of prime numbers. The case $\alpha >1$ was first addressed by Piatetski-Shapiro \cite{piatetski}, who  showed that $\floor{n^{\alpha}}$ is prime infinitely often for all $1<\alpha< \frac{12}{11}$. This range has since been extended through the work of Kolesnik \cite{kolenisk-1} \cite{kolenisk-2}, Heath-Brown \cite{heath-brown}, Liu-Rivat \cite{rivat-1}, with the current best bound $\alpha < \frac{2817}{2426}$ due to Rivat-Sargos \cite{rivat-2}.\\

In this paper, we are motivated by the following natural extension of the above problem: Given real numbers $\alpha,\beta >0$, how often are $\floor{n^{\alpha}}$ and $\floor{n^{\beta}}$ simultaneously primes? More generally, for positive real numbers $\alpha_1, \alpha_2, \cdots, \alpha_k$, how often are all of $\floor{n^{\alpha_1}}, \floor{n^{\alpha_2}},\ldots, \floor{n^{\alpha_k}}$ primes?\\

Let $\Lambda(n)$ be the von Mangoldt function defined as
\begin{equation*}
    \Lambda(n) := \left\{
	\begin{array}{ll}
		\log p  & \mbox{if } n=p^k \text{ for some prime } p\\
		0 & \mbox{otherwise. } 
	\end{array}
\right.
\end{equation*}

Our main theorem is as follows.

\begin{theorem}\label{primes-simultaneous}
    Let $0<\delta=\alpha_1<\alpha_2<\cdots<\alpha_k=\gamma<1$. Suppose 
    \begin{equation*}
        \sum_{i=1}^k \alpha_i \leq \frac{9\delta}{20}-\frac{2\gamma}{5} + \frac{2}{5}.
    \end{equation*} 
    Then, for any integers $c_1,c_2,\ldots, c_k$, as $N$ tends to infinity,
\begin{equation*}
    \sum_{n\leq N} \Lambda(\floor{n^{\alpha_1}}+c_1)\Lambda(\floor{n^{\alpha_2}}+c_2)\cdots\Lambda(\floor{n^{\alpha_k}}+c_k) = N(1 + o(1)).
\end{equation*}

\end{theorem}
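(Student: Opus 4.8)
The plan is to detect the prime-power weights $\Lambda(\floor{n^{\alpha_i}}+c_i)$ by a combinatorial identity and to funnel the whole sum into one analytic input, a \emph{simultaneous equidistribution theorem}: for moduli $q_1,\dots,q_k$ confined to suitable ranges and for arbitrary residues $a_1,\dots,a_k$,
\begin{equation*}
\#\bigl\{\,n\le N:\ \floor{n^{\alpha_i}}\equiv a_i\ \ (\mathrm{mod}\ q_i)\ \text{ for } 1\le i\le k\,\bigr\}=\frac{N}{q_1\cdots q_k}+\mathcal E ,
\end{equation*}
where $\mathcal E$ is smaller than the main term by a power of $N$, uniformly in the $a_i$ and in the admissible $q_i$. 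Granting this, Theorem~\ref{primes-simultaneous} follows from the reduction carried out below, and the hypothesis on $\sum_i\alpha_i$ turns out to be exactly the constraint under which $\mathcal E$ stays power-saving throughout.

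For the reduction I would apply Vaughan's identity (or Heath--Brown's identity, should more divisor variables be convenient) to each factor $\Lambda(\floor{n^{\alpha_i}}+c_i)$ separately, so that each becomes $O\bigl((\log N)^{O(1)}\bigr)$ linear (Type~I) or bilinear (Type~II) pieces whose summation variables lie in explicit dyadic ranges below a small power of $N^{\alpha_i}$. Expanding the product over $i$, the sum $\sum_{n\le N}\prod_i\Lambda(\floor{n^{\alpha_i}}+c_i)$ splits into $O\bigl((\log N)^{O(k)}\bigr)$ pieces $\sum_{n\le N}\prod_{i=1}^{k}f_i(n)$ in which each $f_i(n)$ is either a Type~I expression $\sum_{d_i}a_{d_i}\,\mathbf{1}[\,d_i\mid\floor{n^{\alpha_i}}+c_i\,]\,g_i\bigl(\floor{n^{\alpha_i}}+c_i\bigr)$, with $d_i$ short and $g_i$ smooth, or a genuine Type~II bilinear expression. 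In the pieces that are Type~I in every coordinate, partial summation peels off the smooth weights $g_i$ and leaves a weighted count of $n\le N$ with $\floor{n^{\alpha_i}}$ prescribed modulo $d_i$ for each $i$; there the equidistribution theorem applies directly, its main term $N/(d_1\cdots d_k)$ reassembling through the Vaughan decomposition into the claimed $N(1+o(1))$, while its error sums against $\sum_{d_i}|a_{d_i}|/d_i\ll(\log N)^{O(1)}$ to something $o(N)$. The pieces carrying at least one Type~II coordinate are treated separately, as indicated below.

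The equidistribution theorem itself I would prove by harmonic analysis on the torus. Since $\floor{x}\equiv a\ (\mathrm{mod}\ q)$ is equivalent to $\{(x-a)/q\}\in[0,1/q)$, the quantity to be estimated is the number of $n\le N$ for which $\bigl(n^{\alpha_1}/q_1,\dots,n^{\alpha_k}/q_k\bigr)$ lands in a prescribed box of $(\mathbb R/\mathbb Z)^k$, and by the Erd\H{o}s--Tur\'an--Koksma inequality this is controlled by the exponential sums ($e(t):=e^{2\pi i t}$)
\begin{equation*}
S(h_1,\dots,h_k)=\sum_{n\le N}e\!\left(\frac{h_1 n^{\alpha_1}}{q_1}+\dots+\frac{h_k n^{\alpha_k}}{q_k}\right),\qquad (h_1,\dots,h_k)\neq(0,\dots,0).
\end{equation*}
After a dyadic decomposition in $n$ these phases have derivatives of a regular, predictable size because $0<\alpha_i<1$, and I would bound $S$ by van der Corput's method --- in effect an optimized exponent-pair estimate in which the slowly oscillating power $n^{\delta}$ is the bottleneck for cancellation while the size $N^{\gamma}$ of $\floor{n^{\gamma}}$ limits how large the moduli may be taken; it is the simultaneous optimization of this estimate against all $k$ exponents that yields the numerical condition $\sum_i\alpha_i\le\frac{9\delta}{20}-\frac{2\gamma}{5}+\frac{2}{5}$.

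The Type~II contributions I would handle by Cauchy--Schwarz in the bilinear variables followed by the same family of exponential-sum bounds, and here too the smallness of $\sum_i\alpha_i$ is what keeps matters tractable, since each extra factor $\Lambda(\floor{n^{\alpha_i}}+c_i)$ both enlarges the number of summation variables and shortens the admissible bilinear ranges. I expect the principal obstacle to lie precisely here: securing the exponential-sum estimates with enough \emph{uniformity in the $k$ moduli} $q_1,\dots,q_k$, and with savings robust enough to survive summation over the $O\bigl((\log N)^{O(k)}\bigr)$ Vaughan pieces, so that the accumulated error is genuinely $o(N)$ and not merely $O(N)$. Making the Type~I divisor ranges meet the Type~II exponential-sum gains in the middle is what ultimately fixes the precise shape of the admissible region.
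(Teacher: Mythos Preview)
Your analytic engine---the simultaneous equidistribution of $(\floor{n^{\alpha_1}},\dots,\floor{n^{\alpha_k}})$ in residue classes, proved via Erd\H{o}s--Tur\'an--Koksma and a van der Corput bound on $\sum_n e\bigl(\sum_i h_i n^{\alpha_i}/q_i\bigr)$---is exactly what the paper uses (its Theorem~\ref{multivariable-equidistribution} and Proposition~\ref{unif-distribution}). Where you diverge is in the combinatorial reduction, and there you have made life much harder than necessary.

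The paper uses nothing more than the elementary identity $\Lambda(m)=-\sum_{d\mid m}\mu(d)\log d$. Opening each factor this way and swapping the order of summation gives
\[
\sum_{n\le N}\prod_{i=1}^k\Lambda(\floor{n^{\alpha_i}}+c_i)
=\sum_{d_1\le N^{\alpha_1}+c_1}\cdots\sum_{d_k\le N^{\alpha_k}+c_k}\ \prod_{i=1}^k(-\mu(d_i)\log d_i)\ S(N,\mathbf d,\mathbf c,\pmb\alpha),
\]
with no Type~II pieces whatsoever. The point is that since every $\alpha_i<1$, the divisor $d_i$ ranges only up to $N^{\alpha_i}$, while the outer variable $n$ still has length $N$; the equidistribution error $O\bigl(N^{(3+2\gamma-\delta)/5}(\log N)^{k+1}/(\min_i d_i)^{1/4}\bigr)$ therefore survives a brute-force sum over the full divisor boxes, producing a total error $\ll N^{\sum_i\alpha_i-\delta/4+(3+2\gamma-\delta)/5}(\log N)^{2k+1}$. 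The hypothesis $\sum_i\alpha_i\le\frac{9\delta}{20}-\frac{2\gamma}{5}+\frac{2}{5}$ is precisely the condition that this exponent be at most~$1$. The main term reassembles via $-\sum_{d\le x}\mu(d)(\log d)/d=1+O(e^{-C\sqrt{\log x}})$.

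Your Vaughan/Heath--Brown route and the attendant Type~II analysis are the right reflexes for Piatetski--Shapiro problems with exponent $>1$, where $\floor{n^{\alpha}}$ is \emph{larger} than $n$ and one genuinely cannot push a single divisor variable through the whole range. Here the inequality runs the other way, and the bilinear machinery buys nothing while creating exactly the obstacle you flag at the end of your proposal. If you replace Vaughan by the plain identity $\Lambda=-\mu*\log$, the argument collapses to a few lines once the equidistribution theorem is in hand; the numerical condition then arises not from any exponent-pair optimization inside the exponential sum but simply from summing the equidistribution error over $d_i\le N^{\alpha_i}$.
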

\medskip

{\bf Remark.} The case $k=1$ directly follows from the prime number theorem. We illustrate this below. Write
\begin{equation*}
    \sum_{n\leq N} \Lambda(\floor{n^{\alpha}}+c) = \sum_{n\leq N^{\alpha}} \Lambda(n) P(n),
\end{equation*}
where 
\begin{equation*}
    P(n) = \#\{m : \floor{m^{\alpha}} + c = n\}.
\end{equation*}
Note that, if $\floor{m^{\alpha}}+ c=n$, then
\begin{equation*}
    n\leq \floor{m^{\alpha}} + c < n+1,
\end{equation*}
which implies
\begin{equation*}
    (n-c)^{1/\alpha} \leq m < (n-c+1)^{1/\alpha}.
\end{equation*}
Therefore,
\begin{equation*}
    P(n) = \left((n-c+1)^{1/\alpha} - (n-c)^{1/\alpha}\right) + O(1),
\end{equation*}
where the implied constant is $<1$. Recall the prime number theorem \cite{poussin}, which states that as $x$ tends to infinity
\begin{equation*}
     \sum_{n\leq x} \Lambda(n) = x + O\left(xe^{-c_0\sqrt{\log x}}\right).
\end{equation*}
Therefore
\begin{align*}
    \sum_{n\leq N} \Lambda([n^{\alpha}]+c) &= \sum_{n\leq N^{\alpha}} \left((n-c+1)^{1/\alpha} - (n-c)^{1/\alpha}\right) \Lambda(n) + O\left(\sum_{n\leq N^{\alpha}} \Lambda(n) \right)\\
    & = \sum_{n\leq N^{\alpha}} \left((n-c+1)^{1/\alpha} - (n-c)^{1/\alpha}\right) \Lambda(n) + O\left(N^{\alpha}\right).
\end{align*}
Using partial summation,
\begin{align}\label{pnt-application}
    &\sum_{n\leq N^{\alpha}} \left((n-c+1)^{1/\alpha} - (n-c)^{1/\alpha}\right) \Lambda(n) \nonumber\\
    & =\left(\sum_{n\leq N^{\alpha}} \Lambda(n)\right) \left( (N^{\alpha} - c+1)^{1/\alpha} - (N^{\alpha}-c)^{1/\alpha}\right) \nonumber\\
    & \hspace{4cm}- \int_1^{N^\alpha} \left(\sum_{n\leq y} \Lambda(n)\right) \left(\frac{1}{\alpha} \left((y-c+1)^{-1 +1/\alpha } - (y-c)^{-1+1/\alpha}\right)\right) dy.
\end{align}
In the first term, write
\begin{align*}
    (N^{\alpha} - c+1)^{1/\alpha} - (N^{\alpha}-c)^{1/\alpha} &= (N^{\alpha}-c)^{1/\alpha} \left(\left(1+\frac{1}{N^{\alpha}-c}\right)^{1/\alpha} -1\right)\\
    & \sim N \left( 1 + \frac{1}{\alpha N^{\alpha}}-1\right)\sim \frac{N^{1-\alpha}}{\alpha},
\end{align*}
for large values of $N$ using binomial theorem. Now treating the second term in the integrand similarly and applying the prime number theorem, the sum in \eqref{pnt-application} is equal to
\begin{align*}
    &=\frac{N}{\alpha}- \frac{1}{\alpha} \int_1^{N^{\alpha}} \left(\frac{1}{\alpha} - 1 \right) y^{-1 +1/\alpha}\, dy + O\left(N\, e^{-c_0\sqrt{\log N}}\right) + O\left(N^{1-\alpha}\right)\\
    & =\frac{N}{\alpha} + N \left(1-\frac{1}{\alpha}\right) + O\left(N\, e^{-c_0\sqrt{\log N}}\right)\\
    & =N + O\left(N\, e^{-c_0\sqrt{\log N}}\right),
\end{align*}
as required.
\medskip

The case $k=2$ in Theorem \ref{primes-simultaneous} is already non-trivial, giving the following corollary.

\begin{corollary}\label{two-variable}
    For $0<\beta<\alpha< 1$ satisfying $28 \alpha + 11\beta \leq 8$, as $N\to\infty$
    \begin{equation*}
        \sum_{n\leq N} \Lambda(\floor{n^{\alpha}}) \Lambda(\floor{n^{\beta}}) = N (1+  o(1)).
    \end{equation*}
\end{corollary}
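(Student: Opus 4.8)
The plan is to obtain Corollary \ref{two-variable} as the $k=2$ specialization of Theorem \ref{primes-simultaneous}, so the entire argument consists of a correct identification of variables followed by an algebraic check of the hypothesis. Since the corollary assumes $0<\beta<\alpha<1$, I would respect the ordering $\alpha_1<\alpha_2$ demanded by the theorem by assigning the smaller exponent to $\alpha_1$ and the larger to $\alpha_2$; that is, set $\alpha_1=\beta$ and $\alpha_2=\alpha$, together with the shifts $c_1=c_2=0$. Under this assignment the extreme exponents are $\delta=\alpha_1=\beta$ and $\gamma=\alpha_k=\alpha_2=\alpha$.

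With this substitution, the conclusion of Theorem \ref{primes-simultaneous} reads
\[
\sum_{n\le N}\Lambda(\floor{n^{\beta}})\,\Lambda(\floor{n^{\alpha}}) = N(1+o(1)),
\]
which, since the product of the two $\Lambda$-factors is commutative, is exactly the assertion of the corollary. It therefore remains only to confirm that the corollary's hypothesis $28\alpha+11\beta\le 8$ is precisely what the theorem's hypothesis becomes under the identification $\delta=\beta$, $\gamma=\alpha$.

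For $k=2$ the condition $\sum_{i=1}^{k}\alpha_i\le \tfrac{9\delta}{20}-\tfrac{2\gamma}{5}+\tfrac{2}{5}$ specializes to
\[
\alpha+\beta \le \frac{9\beta}{20}-\frac{2\alpha}{5}+\frac{2}{5}.
\]
Multiplying through by $20$ gives $20\alpha+20\beta\le 9\beta-8\alpha+8$, and collecting the $\alpha$- and $\beta$-terms on the left yields $28\alpha+11\beta\le 8$. This is exactly the inequality assumed in the corollary, so the hypothesis of Theorem \ref{primes-simultaneous} is satisfied and the theorem applies verbatim to produce the stated asymptotic.

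Because the statement is a direct consequence of the main theorem, there is no analytic obstacle to overcome here; the only point requiring care is the bookkeeping of the substitution, specifically the correct assignment of $\delta$ and $\gamma$ as the minimum and maximum exponents, so that the linear reduction of the inequality to $28\alpha+11\beta\le 8$ is carried out without sign or coefficient errors.
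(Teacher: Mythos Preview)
Your proposal is correct and matches the paper's own treatment: the corollary is stated immediately after Theorem \ref{primes-simultaneous} as the $k=2$ specialization with $c_1=c_2=0$, and your algebraic reduction of the hypothesis to $28\alpha+11\beta\le 8$ is exactly the intended check.
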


This shows that for $\alpha, \beta$ satisfying the above condition, $\floor{n^{\alpha}}$ and $\floor{n^{\beta}}$ are simultaneously primes infinitely often.\\

\noindent
{\bf Remark.} It is worth noting that for certain values of $\alpha$ and $\beta$, it is possible to conclude that $\floor{n^{\alpha}}$ and $\floor{n^{\beta}}$ are both primes infinitely often, using known results on primes in short intervals. Suppose $0<\beta<\alpha<1$ and $\floor{n^{\beta}}= p$, a prime number. Then, we have $\floor{n^{\beta}} = p$ for all $n\in [ p^{1/\beta}, (p+1)^{1/\beta})$. Taking exponent $\alpha$ to this interval gives $[p^{\alpha/\beta}, (p+1)^{\alpha/\beta})$. Note that $(p+1)^{\alpha/\beta} \sim p^{\alpha/\beta} + \frac{\alpha}{\beta} p^{\alpha/\beta -1}$.  We now invoke the best known result on primes in short intervals due to Baker, Harman and Pintz \cite{baker}, which states that for sufficiently large $x$, there exists a prime in the interval $[x, x+ x^{0.525}]$. Therefore, if $\frac{\alpha}{\beta}-1 > 0.525 \frac{\alpha}{\beta}$, we conclude that there exists a prime in the interval $[p^{\alpha/\beta}, (p+1)^{\alpha/\beta})$ and hence there exists an $n\in  [ p^{1/\beta}, (p+1)^{1/\beta})$ such that $\floor{n^{\alpha}}$ and $\floor{n^{\beta}}$ are simultaneously primes. Hence for $0<\beta<\alpha< 1$ satisfying $\beta < 0.475 \alpha$, there are infinitely many $n$ such that $\floor{n^{\alpha}}$ and $\floor{n^{\beta}}$ are both primes. The condition $28 \alpha + 11\beta \leq 8$ in Corollary \ref{two-variable} holds in more generality.

\bigskip

The key ingredient in our approach is to establish an equidistribution result for $\floor{n^{\alpha_1}}, \floor{n^{\alpha_2}},$ $\ldots$, $\floor{n^{\alpha_k}}$ simultaneously across $k$-many arithmetic progressions. This can be thought of as a higher dimensional analog of a result due to M. R. Murty and K. Srinivas \cite{ram}.\\

Let $\mathbf{c} = (c_1,c_2,\ldots, c_k)$ and $\mathbf{d} = (d_1,d_2,\ldots, d_k)$ be tuples of positive integers satisfying $0\leq c_i < d_i$ for all $1\leq i\leq k$. Let $\pmb\alpha= (\alpha_1,\alpha_2,\ldots, \alpha_k)$ be a tuple of positive real numbers. Define
\begin{equation*}
    S(N, \mathbf{d}, \mathbf{c}, \pmb \alpha) := \#\bigg\{ n\leq N : \floor{n^{\alpha_1}} \equiv c_1 \bmod d_1 , \floor{n^{\alpha_2}} \equiv c_2 \bmod d_2 ,\ldots, \floor{n^{\alpha_k}} \equiv c_k \bmod d_k\bigg\}.
\end{equation*}
We prove the following theorem, which is interesting in its own right.

\begin{theorem}\label{multivariable-equidistribution}
    Let $\mathbf{d}, \mathbf{c}, \pmb \alpha$ be as above. Suppose $0<\delta=\alpha_1<\cdots<\alpha_k=\gamma<1$. Then, as $N$ tends to infinity,
\begin{equation*}
 S(N, \mathbf{d}, \mathbf{c}, \pmb \alpha) = \frac{N}{d_1d_2\ldots d_k} +  O\left(\frac{N^{\frac{3 + 2\gamma - \delta}{5}} (\log N)^{k+1}}{(\min(d_1,d_2,\ldots,d_k))^\frac{1}{4}} \right).
\end{equation*}
\end{theorem}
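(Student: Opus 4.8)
The plan is to detect the congruence conditions via a finite Fourier (additive-characters) expansion over each modulus $d_i$, reducing the counting problem to bounding exponential sums of the shape $\sum_{n\le N} e(a_1 \lfloor n^{\alpha_1}\rfloor/d_1 + \cdots + a_k \lfloor n^{\alpha_k}\rfloor/d_k)$. Concretely, for each $i$ write the indicator of $\lfloor n^{\alpha_i}\rfloor \equiv c_i \bmod d_i$ as $\frac{1}{d_i}\sum_{a_i=0}^{d_i-1} e\!\left(\tfrac{a_i(\lfloor n^{\alpha_i}\rfloor - c_i)}{d_i}\right)$, multiply the $k$ expansions together, and expand. The term with $a_1=\cdots=a_k=0$ contributes exactly $N/(d_1\cdots d_k)$, which is the main term; every other term is a pure exponential sum that we must show is small.

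I should first compute more carefully how to handle the inner structure. After expansion the error is
\[
\frac{1}{d_1\cdots d_k}\sum_{\substack{(a_1,\ldots,a_k)\\ \text{not all }0}} e\!\left(-\sum_i \tfrac{a_i c_i}{d_i}\right)\sum_{n\le N} e\!\left(\sum_{i=1}^k \tfrac{a_i}{d_i}\lfloor n^{\alpha_i}\rfloor\right).
\]
To control the inner sum I would remove the floor functions: writing $\lfloor n^{\alpha_i}\rfloor = n^{\alpha_i} - \{n^{\alpha_i}\}$ and using a smooth (or Vaaler-type) approximation to the sawtooth, or more simply splitting into the region where the fractional parts behave nicely, one reduces to estimating $\sum_{n\le N} e(f(n))$ where $f(n) = \sum_i \theta_i n^{\alpha_i}$ with $\theta_i = a_i/d_i$. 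The dominant term in $f$ comes from the largest exponent $\gamma = \alpha_k$, so $f'(n) \asymp \theta_k \alpha_k n^{\gamma-1}$ and higher derivatives decay; this is exactly the setting for van der Corput / Weyl differencing estimates for exponential sums with slowly varying phases. The standard van der Corput bound for $\sum e(f(n))$ with $f$ of size roughly $T$ over $[1,N]$ gives savings of the form $N (T/N^?)^{1/2} + \ldots$; optimizing the van der Corput process (likely a second-derivative estimate, i.e. the "AB" step, giving $\ll N |f''|^{1/2} + |f''|^{-1/2}$) against the trivial bound produces the exponent $\tfrac{3+2\gamma-\delta}{5}$ after balancing — the fraction with denominator $5$ strongly suggests a single application of Weyl–van der Corput squaring combined with a split at a parameter optimized as a power of $N$.

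**Key steps, in order.** (1) Fourier-expand the $k$ congruence conditions and isolate the main term $N/(d_1\cdots d_k)$. (2) For a fixed nonzero tuple $(a_1,\ldots,a_k)$, approximate each $\lfloor n^{\alpha_i}\rfloor$ by $n^{\alpha_i}$ up to a sawtooth, and handle the sawtooth contributions by a Fourier expansion of $\psi(x) = \{x\} - 1/2$ truncated at height $H$ (Vaaler), reducing everything to sums $\sum_{n\le N} e(h_1 n^{\alpha_1} + \cdots + h_k n^{\alpha_k})$ for integer vectors $(h_1,\ldots,h_k)$ and a linear combination with the $a_i/d_i$. (3) Bound each such exponential sum by van der Corput's method: the phase $g(n) = \sum c_i n^{\alpha_i}$ has $g''(n) \asymp$ (coefficient of the top term)$\cdot n^{\gamma-2}$, so the second-derivative test gives a bound in terms of $N$ and the coefficient sizes. (4) Sum the resulting bounds over all $a_i < d_i$ and over the truncation parameters $h_i \le H$, pick up logarithmic factors $(\log N)^{k+1}$ from the $k$ Fourier truncations plus the divisor-type sum, and optimize $H$ and the van der Corput parameters as powers of $N$ to land on the exponent $\tfrac{3+2\gamma-\delta}{5}$ with the stated $(\min_i d_i)^{-1/4}$ gain (the $1/4$ comes from the square-root in van der Corput applied to a coefficient that is $\ge 1/\min_i d_i$ in the relevant term, square-rooted once more in balancing).

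**The main obstacle** will be step (3)–(4): getting a clean, uniform van der Corput estimate for the multi-term phase $\sum_i c_i n^{\alpha_i}$ that is genuinely governed only by the \emph{largest} exponent $\gamma$ and the \emph{smallest} exponent $\delta$ (the latter controls how small the derivative can get, hence the length of "minor-arc" ranges), while tracking the dependence on $\min_i d_i$ through the whole optimization. One must be careful that the lower-order terms $n^{\alpha_i}$ with $i<k$ do not destroy the monotonicity/convexity needed for van der Corput, and that the contribution of the frequencies $h_i$ from the Vaaler expansion, when multiplied by $n^{\alpha_i}$, stays under control — this is where choosing $H$ as the right power of $N$ (balancing the truncation error $N/H$ against $H$ copies of the exponential-sum bound) is delicate, and it is the step that ultimately forces the precise shape of the exponent. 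Everything else — the Fourier expansion, extraction of the main term, collection of logarithms — is routine bookkeeping.
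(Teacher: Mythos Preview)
Your outline is workable but takes a longer road than the paper does, and one step is shakier than you indicate.

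\textbf{How the paper proceeds.} The paper's key simplification is the elementary observation (Lemma~2.4) that $\lfloor x\rfloor\equiv c\bmod d$ if and only if $\{x/d\}\in[c/d,(c+1)/d)$. This converts the counting problem \emph{immediately} into a discrepancy problem for the $k$-dimensional sequence $\mathbf{x}_n=(\{n^{\alpha_1}/d_1\},\ldots,\{n^{\alpha_k}/d_k\})$, with no floors anywhere. The Erd\H{o}s--Tur\'an--Koksma inequality then bounds the discrepancy in terms of the sums $\sum_{n\le N}e\bigl(\sum_i (l_i/d_i)n^{\alpha_i}\bigr)$ for integer lattice points $\boldsymbol\ell$, weighted by $1/r(\boldsymbol\ell)$. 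These floor-free sums are exactly what Proposition~2.1 handles (via Poisson summation plus the van der Corput second-derivative lemma), and summing over $\|\boldsymbol\ell\|_\infty\le m$ and optimising $m$ gives the stated exponent $(3+2\gamma-\delta)/5$.

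\textbf{Comparison.} Your route---additive characters on $\mathbb{Z}/d_i\mathbb{Z}$, then Vaaler to strip off the floors---eventually reaches the same exponential sums, but with an extra layer of Fourier expansion and a double summation (over the characters $a_i<d_i$ \emph{and} over the truncation frequencies $h_i$). The paper's reformulation via fractional parts bypasses the floor-removal step entirely, and Erd\H{o}s--Tur\'an--Koksma supplies the harmonic weights $1/r(\boldsymbol\ell)$ automatically. What you call the ``main obstacle''---a uniform van der Corput bound for the multi-term phase---is in fact routine here (it is the paper's Proposition~2.1); the bookkeeping you dismiss as routine is where the two approaches actually differ.

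\textbf{A genuine concern.} Your explanation of the $(\min_i d_i)^{-1/4}$ factor is not right, and it is not clear your route recovers it. In the paper this saving comes from the \emph{upper} bound $(\sum_i|l_i/d_i|)^{1/4}\le(\min_i d_i)^{-1/4}(\sum_i|l_i|)^{1/4}$ inside the Erd\H{o}s--Tur\'an--Koksma sum; the $1/r(\boldsymbol\ell)$ weights then make the remaining $\boldsymbol\ell$-sum converge. In your additive-character expansion the frequencies $a_i/d_i$ are all at most $1$, so the analogous factor is $O(1)$, and summing the resulting bound over all $\prod_i d_i$ tuples $(a_i)$---then dividing by $\prod_i d_i$---gives no $d_i$-saving at all. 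You would need to argue more carefully (or simply adopt the fractional-part/discrepancy reformulation) to obtain the stated dependence on $\min_i d_i$.
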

\medskip

\noindent
For $k=2$, this yields the following corollary.

\begin{corollary}
    For positive integers $c_1,c_2,d_1, d_2$ and real numbers $\alpha,\beta$ satisfying $0<\beta<\alpha<1$, as $N$ tends to infinity
    \begin{equation*}
        \#\bigg\{n\leq N : \floor{n^{\alpha}} \equiv c_1 \bmod d_1 \,\, \text{and} \,\, \floor{n^{\beta}} \equiv c_2 \bmod d_2\bigg\} =  \frac{N}{d_1d_2} +  O\left(\frac{N^{\frac{3 + 2\alpha - \beta}{5}} (\log N)^3}{(\min(d_1,d_2))^\frac{1}{4}} \right)
    \end{equation*}
\end{corollary}

The assumption that the exponents $\alpha_i$'s are distinct is essential. Indeed, if two exponents are the same, incompatible congruence conditions could render the set $S(N, \mathbf{d}, \mathbf{c}, \pmb \alpha)$ empty. When $\alpha_i$'s are distinct, the count $S(N, \mathbf{d}, \mathbf{c}, \pmb \alpha)$ exhibits true independence across all arithmetic progressions. This phenomena is precisely captured in Proposition \ref{unif-distribution} in the next section.

\bigskip
 
\section{\bf Preliminaries}
\bigskip

In this section, we establish some lemmata which will be instrumental in proving the main results. For a non-zero real number $h$, let 
\begin{equation*}
    \rS(N) = \sum_{n=1}^N e(h n^{\alpha}),
\end{equation*}
where $e(x) := e^{2\pi i x}$. In \cite[Theorem 1(a)]{ram}, M. R. Murty and K. Srinivas proved that for all integers $h\neq 0$, as $N$ tends to infinity
\begin{equation*}
    \rS(N) = O_{\alpha}\left(|h|^{1/4}\, N^{(\alpha+2)/4} \,(\log N)\, (\log N|h|)\right),
\end{equation*}
for $0<\alpha<2$ and $\alpha \neq 1$. The result in \cite{ram} assumes $h$ to be an integer. However, the same proof goes through without such an assumption. A crucial role in the proof of our theorem involves a higher dimensional analog of the above result as stated below.\\

Let $\mathbf{h}=(h_1,h_2,\ldots, h_k)$ and $\pmb\alpha=(\alpha_1,\alpha_2,\ldots, \alpha_k)$ be tuples of non-zero real numbers. Define
\begin{equation*}
    \rS(N, \mathbf{h}, \pmb\alpha) :=  \sum_{n=1}^N e\left(h_1 n^{\alpha_1} + h_2 n^{\alpha_2}+\cdots + h_k n^{\alpha_k}\right).
\end{equation*}
We obtain upper bounds for $\rS(N, \mathbf{h}, \pmb\alpha)$ as follows.

\begin{proposition}\label{unif-distribution}
    Let $0<\alpha_1,\alpha_2,\ldots,\alpha_k<1$ with $\gamma = \max (\alpha_j)$ and $\delta = \min (\alpha_j)$. Let $h_1,h_2,\ldots, h_k$ be real numbers such that at least one of the $h_i$'s is non-zero. Suppose that for any subset $E \subseteq \{1,2,\dots,k\}$ with $\alpha_i = \alpha_j$ for all $i,j \in E$, either
    \begin{enumerate}
        \item $\sum_{i\in E} h_i \neq 0$, or
        \item there exists an $l \notin E$ such that $h_l \neq 0$.
    \end{enumerate}
   Then, as $N$ tends to infinity, we have
    \begin{equation*}
        \rS(N, \mathbf{h}, \pmb\alpha)= O_{\pmb{\alpha}}\left(\left( |h_1| + |h_2| + \cdots + |h_k| \right)^\frac{1}{4}N^{\frac{1}{2}+\frac{\gamma}{2}-\frac{\delta}{4}}  \log \left( N\left( |h_1| + |h_2| + \cdots + |h_k| \right)\right) \log( N) \right),
    \end{equation*}
    where the implied constant only depends on $\pmb \alpha$.
\end{proposition}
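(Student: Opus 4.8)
The plan is to reduce the multivariable exponential sum $\rS(N,\mathbf{h},\pmb\alpha)$ to a one-variable sum of the shape $\sum_{n} e(h n^{\alpha})$ handled by the Murty--Srinivas estimate, via a van der Corput $A$-process (Weyl differencing) followed by a $B$-process (stationary phase / exponent pair). Write $f(n) = h_1 n^{\alpha_1} + \cdots + h_k n^{\alpha_k}$. The key structural observation is that, because the exponents are distinct real numbers strictly between $0$ and $1$, the phase $f$ is genuinely nonlinear: its derivatives $f'(x), f''(x)$ are dominated, up to constants depending only on $\pmb\alpha$, by the terms with the largest exponent among those with nonzero coefficient, so one term controls the oscillation. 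The grouping hypothesis (1)--(2) in the Proposition is exactly what guarantees that after collecting equal exponents the resulting phase still has a nonzero leading coefficient on some term with exponent $\gamma' \le \gamma$, hence is not identically the trivial sum; I would first do this bookkeeping to replace $f$ by a phase with genuinely distinct exponents and nonzero coefficients, losing only constants.

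**Next I would** run the argument that (presumably) underlies the $k=1$ case in \cite{ram}: split $[1,N]$ dyadically into $O(\log N)$ ranges $n \asymp M$, and on each range apply van der Corput's inequality
\[
\Big|\sum_{n\asymp M} e(f(n))\Big|^2 \ll \frac{M^2}{Q} + \frac{M}{Q}\sum_{1\le q\le Q}\Big|\sum_{n} e\big(f(n+q)-f(n)\big)\Big|,
\]
for a parameter $Q \le M$ to be chosen. The differenced phase $f(n+q)-f(n)$ behaves like $q f'(n) \asymp q \sum_i h_i \alpha_i n^{\alpha_i - 1}$, which is a sum of terms with exponents $\alpha_i - 1 \in (-1,0)$; its derivative in $n$ has size $\asymp q H n^{\gamma'-2} M$ on the relevant range (with $H = |h_1|+\cdots+|h_k|$), so the inner sum is an exponential sum with a small, smooth, monotonic derivative to which one applies the first-derivative test (Kusmin--Landau) or the $B$-process to bound it by $O\big((q H M^{\gamma'-1})^{-1}\big) + O(q H M^{\gamma' - 1} \cdot M)$ or similar. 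Summing over $q\le Q$, optimizing $Q$ against $M$, taking square roots, and summing the $O(\log N)$ dyadic contributions should reproduce exactly the exponent $\tfrac12 + \tfrac{\gamma}{2} - \tfrac{\delta}{4}$ and the $H^{1/4}$ and double-logarithmic factors; the appearance of $\delta = \min\alpha_j$ in the exponent comes from the fact that the \emph{smallest} exponent among the terms, after differencing, governs how large $Q$ can be taken before the error terms blow up. Rather than redo all of this, I would actually prefer to \emph{invoke} the one-variable result of Murty--Srinivas as a black box: after the $A$-process the inner sum $\sum_n e(f(n+q)-f(n))$ has a phase dominated by its top term $\asymp q h_{i_0}\alpha_{i_0} n^{\alpha_{i_0}-1}$, which is (up to a bounded perturbation that can be absorbed by partial summation / a further trivial estimate on lower-order terms) of the form $\tilde h n^{\tilde\alpha}$ with $0 < \tilde\alpha = \alpha_{i_0} - 1 + 1 <1$... — i.e. one reduces to estimating $\sum e(\tilde h n^{\tilde\alpha})$ and quotes the stated bound $O(|\tilde h|^{1/4} M^{(\tilde\alpha+2)/4}\log M \log(M|\tilde h|))$.

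**The main obstacle** I anticipate is handling the lower-order terms in the differenced phase: after Weyl differencing, $f(n+q)-f(n)$ is not a clean monomial but a sum $\sum_i c_i(q) n^{\alpha_i-1}(1 + O(q/n))$, and one must argue that only the dominant term matters for the van der Corput / exponent-pair estimate while the rest contribute lower order — this requires care precisely when several $h_i$ are comparable in size or when $q$ is close to $M$, and it is where the distinctness of the $\alpha_i$ (so that the monomials $n^{\alpha_i-1}$ have genuinely separated growth rates) is used a second time. A secondary technical point is making the reduction in the previous paragraph rigorous: quoting the one-variable bound requires the phase to be exactly $\tilde h n^{\tilde\alpha}$, so one instead applies the underlying van der Corput estimate directly to $f(n+q)-f(n)$, controlling $\frac{d^j}{dn^j}\big(f(n+q)-f(n)\big)$ for $j=1,2$ uniformly, which is routine but must be done honestly. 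Once these size comparisons are pinned down, the optimization of $Q$ and the dyadic summation are mechanical and yield the claimed bound.
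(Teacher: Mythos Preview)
Your plan takes a genuinely different route from the paper's, and the detour through Weyl differencing is where the trouble lies. The paper does \emph{not} use the $A$-process at all. Instead it generalizes the Murty--Srinivas argument for $k=1$ \emph{directly} to the multivariable phase: on each dyadic block $[W,2W]$, apply the truncated Poisson summation formula (Lemma~\ref{lemma-1}) to $f(t)=e(h_1t^{\alpha_1}+\cdots+h_kt^{\alpha_k})$, using the upper bound $|f'(t)|\ll H\,W^{\gamma-1}$ (with $H=\sum|h_i|$) to control the truncation error; then bound each of the $O(M)$ oscillatory integrals $\int_W^{2W} e(F(t))\,dt$ with $F(t)=\sum h_i t^{\alpha_i}+mt$ by the second-derivative test (Lemma~\ref{lemma-2}), using a lower bound $|F''(t)|\gg_{\pmb\alpha} W^{\delta-2}$. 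Balancing the two error terms by choosing $M\asymp H^{3/4}W^{-1/2+\gamma/2+\delta/4}$ gives $\rS(W,2W)\ll H^{1/4}W^{1/2+\gamma/2-\delta/4}\log(WH)$, and summing over $O(\log N)$ dyadic blocks yields the claim. Thus $\gamma$ enters via the \emph{first}-derivative upper bound in Poisson truncation, and $\delta$ via the \emph{second}-derivative lower bound in the integral estimate---no differencing is involved.

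Your proposal to difference first and then invoke the one-variable Murty--Srinivas bound as a black box is circular in spirit and technically unfounded: the Murty--Srinivas bound is itself proved by the Poisson-plus-second-derivative argument above, so the right generalization is to rerun that argument on $\sum h_i n^{\alpha_i}$ directly, not to difference and then try to collapse back to a monomial. After the $A$-process the phase $f(n+q)-f(n)$ has leading exponents $\alpha_i-1\in(-1,0)$, so it is not of the shape $\tilde h\,n^{\tilde\alpha}$ with $\tilde\alpha\in(0,1)$ (your ``$\alpha_{i_0}-1+1$'' maneuver does not correspond to any actual transformation of the sum), and the ``main obstacle'' you flag---controlling the lower-order terms of the differenced phase---is a real one that your sketch does not resolve. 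You also never carry out the optimization in $Q$ to verify the exponent $\tfrac12+\tfrac\gamma2-\tfrac\delta4$; the heuristic that $\delta$ governs how large $Q$ may be is asserted but not computed. All of this is bypassed by the paper's direct approach, which treats the multivariable phase on the same footing as the single monomial.
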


The proof of this proposition follows an approach similar to that of Murty and Srinivas \cite{ram}. We recall \cite[Lemma 1, Lemma 2]{ram}  below, and refer the reader to their paper for the detailed proofs.
\medskip

\begin{lemma}[Effective Poisson summation formula]\label{lemma-1}
Let $f(t)$ be a differentiable function on $[1,N]$ satisfying $|f'(t)|\leq K$. Then, for any $M\geq 1$,
\begin{equation*}
    \sum_{j=1}^N f(j) = \sum_{0\leq |m| \leq M} \int_{1}^{N} f(t)\, e(mt) \,dt + O\left(\frac{NK \log M}{M}\right).
\end{equation*}
\end{lemma}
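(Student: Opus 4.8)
The plan is to prove the effective Poisson summation formula in Lemma \ref{lemma-1} by starting from the classical (exact) Poisson summation formula applied to a suitable smooth truncation of $f$, and then controlling the error introduced by restricting the sum over the dual variable $m$ to the range $|m|\le M$. Since $f$ is only assumed differentiable (not smooth or rapidly decaying) on the finite interval $[1,N]$, the first step is to replace $f$ by its restriction to $[1,N]$, viewed as a function on $\mathbb{R}$ that vanishes (or is extended appropriately) outside the interval, so that the Fourier-analytic machinery applies. Writing $g$ for this truncation, the exact Poisson summation formula gives $\sum_{j} g(j) = \sum_{m\in\Z} \widehat{g}(m)$, where $\widehat{g}(m) = \int_{1}^{N} f(t)\,e(mt)\,dt$. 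The left-hand side equals $\sum_{j=1}^{N} f(j)$ up to boundary contributions at the endpoints, which are $O(1)$ and absorbed into the stated error term.

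The heart of the argument is then to estimate the tail $\sum_{|m|>M} \widehat{g}(m)$ and show it is $O\!\left(\frac{NK\log M}{M}\right)$. First I would integrate by parts once in $\widehat{g}(m) = \int_1^N f(t)\,e(mt)\,dt$, which for $m\neq 0$ yields
\begin{equation*}
    \widehat{g}(m) = \frac{f(N)e(mN) - f(1)e(m)}{2\pi i m} - \frac{1}{2\pi i m}\int_1^N f'(t)\,e(mt)\,dt.
\end{equation*}
The boundary terms contribute $O(|f(N)|/|m| + |f(1)|/|m|)$, and the integral term is bounded using $|f'(t)|\le K$ by $O(NK/|m|)$. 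Summing these bounds over $|m|>M$ gives a divergent $\sum 1/|m|$, so a single integration by parts is not by itself enough; the $\log M/M$ shape of the target error indicates that one must truncate more carefully. The standard device is to sum $\widehat{g}(m)$ over the dyadic-type range and use the fact that the genuinely problematic contribution comes from the derivative bound: pairing the terms $\pm m$ and exploiting cancellation, or equivalently bounding $\sum_{M<|m|\le 2^{j}M} NK/|m| \ll NK$ on each dyadic block and then observing that the relevant blocks contribute a factor $\log M$ before the $1/M$ decay from the boundary-free part takes over. The cleanest route is to note that the difference between $\sum_{|m|\le M}\widehat{g}(m)$ and the full sum is exactly $-\sum_{|m|>M}\widehat{g}(m)$, and to bound this tail by combining the $O(NK/|m|)$ bound on $\widehat{g}(m)$ with a second integration by parts on any smooth portion, yielding the $NK\log M/M$ estimate.

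The main obstacle I anticipate is producing the exact shape $O(NK\log M/M)$ rather than a cruder $O(NK\log N)$ or $O(NK)$ bound. A single integration by parts controls each $\widehat{g}(m)$ by $O(NK/|m|)$ but the harmonic sum over $|m|>M$ diverges logarithmically in the upper cutoff, so the extraction of the $1/M$ saving requires either a more delicate treatment — for instance using the Fejér-kernel or a smoothed truncation of the dual sum so that the sharp cutoff at $M$ is replaced by a gentle one, which regularizes the tail — or a careful van der Corput / second-difference estimate on $\sum_{|m|>M}\widehat g(m)$. Concretely, I would smooth the indicator of $|m|\le M$ by a Fejér-type weight, apply Poisson summation to the smoothed sum where the tail genuinely decays like $1/M$ per unit, and then bound the difference between the smoothed and sharp truncations, which is where the extra $\log M$ factor enters. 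Since this lemma is quoted verbatim from Murty--Srinivas \cite{ram}, I would present the reduction to the exact Poisson summation formula and the integration-by-parts bound in full, and then refer to \cite{ram} for the technical extraction of the $\log M/M$ tail estimate, as that is the step most faithfully reproduced from their argument.
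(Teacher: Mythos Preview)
The paper does not actually prove this lemma: it simply recalls it from Murty--Srinivas \cite{ram} and refers the reader there for the proof. So there is no argument in the paper to compare against, and your final sentence --- deferring the technical extraction to \cite{ram} --- is in fact exactly what the authors do.

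That said, your sketch stops short at precisely the point where it can be finished cleanly, and the Fej\'er-smoothing detour you propose is unnecessary. After one integration by parts and pairing $m$ with $-m$ (which you mention but do not exploit), the boundary terms at the integers $t=1,N$ cancel, and the tail becomes
\[
\sum_{|m|>M}\widehat g(m)
= \int_1^N f'(t)\Bigl(-\sum_{|m|>M}\frac{e(mt)}{2\pi i m}\Bigr)\,dt
= \int_1^N f'(t)\bigl(\psi(t)-\psi_M(t)\bigr)\,dt,
\]
where $\psi(t)=\{t\}-\tfrac12$ is the sawtooth and $\psi_M$ its $M$-th partial Fourier sum. The standard pointwise bound $|\psi(t)-\psi_M(t)|\ll\min\bigl(1,\,1/(M\|t\|)\bigr)$ then gives, on each unit interval, a contribution $\ll K\cdot(\log M)/M$; summing over the $N$ unit intervals yields the stated $O(NK\log M/M)$. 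This is presumably what \cite{ram} does and is more direct than smoothing the truncation on the dual side.
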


\medskip

\begin{lemma}\label{lemma-2}
Let $F(x)$ be real, twice differentiable function on $[a,b]$ such that $|F''(x)| \geq m > 0$. Then,
\begin{equation*}
    \left| \int_a^b e^{iF(x)} \,dx \right| \leq \frac{8}{\sqrt{m}}.
\end{equation*}
\end{lemma}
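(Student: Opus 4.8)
The plan is to prove this classical van der Corput second-derivative test by isolating the single ``stationary'' region where the phase derivative $F'$ is small and handling it separately from the region where integration by parts is effective.

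First I would reduce to the case $F''(x)\geq m>0$ on all of $[a,b]$. Since $F'$ is differentiable, its derivative $F''$ enjoys the intermediate value property by Darboux's theorem; as the hypothesis $|F''|\geq m>0$ forbids the value $0$, the function $F''$ cannot change sign, so it is either everywhere $\geq m$ or everywhere $\leq -m$. In the latter case I replace $F$ by $-F$, which conjugates the integrand $e^{iF}$ and hence leaves $\left|\int_a^b e^{iF}\,dx\right|$ unchanged. Thus I may assume $F''\geq m>0$, which makes $F'$ continuous and strictly increasing on $[a,b]$.

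Next, I fix a cutoff parameter $\delta>0$ and split $[a,b]$ according to the size of $F'$. Because $F'$ is strictly increasing and continuous, the ``bad'' set $B=\{x:|F'(x)|\leq\delta\}$ is a single subinterval $[u,v]$ (possibly empty or degenerate), and from $2\delta\geq F'(v)-F'(u)=\int_u^v F''\,dt\geq m(v-u)$ I obtain the length bound $|B|\leq 2\delta/m$. On $B$ I bound the integral trivially by its length, $\left|\int_B e^{iF}\,dx\right|\leq 2\delta/m$. The complement consists of at most two intervals, on each of which $|F'|\geq\delta$ and $F'$ keeps a constant sign; on such a ``good'' interval $[p,q]$ I integrate by parts, writing $e^{iF}=(iF')^{-1}(e^{iF})'$, to get
\begin{equation*}
\int_p^q e^{iF}\,dx = \left[\frac{e^{iF}}{iF'}\right]_p^q - \int_p^q e^{iF}\cdot\frac{iF''}{(F')^2}\,dx.
\end{equation*}
The two boundary terms are each at most $1/\delta$ since $|F'|\geq\delta$ at the endpoints, and because $F''/(F')^2$ has constant sign and equals $-\frac{d}{dx}(1/F')$, the remaining integral is at most $\left|1/F'(p)-1/F'(q)\right|\leq 1/\delta$. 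Hence each good interval contributes at most $3/\delta$, and the two of them at most $6/\delta$.

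Combining the two regions gives $\left|\int_a^b e^{iF}\,dx\right|\leq 6/\delta+2\delta/m$, and choosing $\delta=\sqrt{m}$ balances the two terms and yields exactly $6/\sqrt{m}+2/\sqrt{m}=8/\sqrt{m}$, as claimed. The only genuinely delicate point is the stationary region: near a zero of $F'$ integration by parts fails because $1/F'$ is unbounded, so the gain must instead come from the smallness of that region's length, and the convexity lower bound $F''\geq m$ is precisely what converts the cutoff $\delta$ into the length bound $2\delta/m$. Balancing these two competing estimates through the choice of $\delta$ is what produces the $m^{-1/2}$ saving; the remaining steps are routine.
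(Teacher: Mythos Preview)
The paper does not actually prove this lemma: it merely recalls it from Murty--Srinivas \cite{ram} and refers the reader there for the details. So there is no ``paper's own proof'' to compare against, and what you have written is precisely the classical van der Corput second-derivative argument that one finds in standard references. Your proof is correct: the reduction to $F''\ge m$ via Darboux's theorem, the length bound $|B|\le 2\delta/m$ for the stationary region, the integration-by-parts bound $3/\delta$ on each monotone piece of $F'$, and the final optimisation $\delta=\sqrt{m}$ giving exactly $8/\sqrt{m}$ are all in order.

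One small technical remark: the integration-by-parts step implicitly uses that $\int_p^q F''/(F')^2\,dx = 1/F'(p)-1/F'(q)$, which needs the fundamental theorem of calculus for $(1/F')'=-F''/(F')^2$. Strictly speaking this requires $F''$ to be (say) Riemann integrable, not merely to exist pointwise. This is a standard looseness in how the lemma is usually stated rather than a gap in your argument, and in every application in the paper $F$ is smooth, so nothing is lost.
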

\medskip

\begin{proof}[Proof of Proposition \ref{unif-distribution}] 
Let $0<\alpha_1,\alpha_2,\ldots,\alpha_k<1$, $\gamma = \max(\alpha_j)$ and $\delta = \min(\alpha_j)$. Consider the dyadic sum
\begin{equation*}
    \rS(W,2W) := \sum_{W\leq n< 2W} e\left(h_1n^{\alpha_1}+h_2n^{\alpha_2} +\cdots+h_kn^{\alpha_k} \right).
\end{equation*}
We will take $f(t) =   e\left(h_1t^{\alpha_1}+h_2t^{\alpha_2} +\cdots+h_k t^{\alpha_k} \right)$ in Lemma \ref{lemma-1}. Note that
\begin{align*}
    |f'(t)| &= 2\pi \left| h_1\alpha_1t^{\alpha_1-1} + h_2\alpha_2t^{\alpha_2-1} +\cdots+ h_k\alpha_k t^{\alpha_k-1}\right| \\ 
    &\leq 2\pi \left(\alpha_1\left| h_1 \right| t^{\alpha_1-1} +\alpha_2 \left| h_2 \right|t^{\alpha_2-1} +\cdots+ \alpha_k \left| h_k \right|t^{\alpha_k-1}\right)\\
    & \ll \left(|h_1| + |h_2| + \cdots + |h_k|\right) t^{\gamma-1}.
\end{align*}
Therefore, in the interval $[W,2W]$,
\begin{equation*}
    |f'(t)| \ll \left(|h_1| + |h_2| + \cdots + |h_k|\right)\, W^{\gamma-1},
\end{equation*}
where the implied constant is absolute. Thus, by Lemma \ref{lemma-1}, we get
\begin{align}
    \rS(W,2W) = \sum_{0\leq |m| \leq M} \int_W^{2W} & e(h_1t^{\alpha_1}+h_2t^{\alpha_2} +\cdots+h_k t^{\alpha_k} + mt)  \, dt \nonumber \\
    & + O\left(\frac{\left(|h_1| + |h_2| +\cdots + |h_k| \right)\, W^{\gamma} \,\log M}{M}\right),\label{equation-1}
\end{align}
where $M$ is some large positive constant. To bound the integral in the sum above, we take 
\begin{equation*}
    F(t) = h_1t^{\alpha_1} +h_2t^{\alpha_2} +\cdots+h_kt^{\alpha_k} +  mt
\end{equation*}
in Lemma \ref{lemma-2}. Then
\begin{equation*}
    F''(t) = \alpha_1 (\alpha_1 -1) t^{\alpha_1-2}h_1 + \alpha_2 (\alpha_2 -1) t^{\alpha_2-2}h_2 +\cdots+ \alpha_k (\alpha_k -1) t^{\alpha_k-2}h_k.
\end{equation*}
The conditions imposed on $h_i$'s ensure that $F''(t) \neq 0$. Since $0<\alpha_i<1$, for $t\in [W,2W]$, we obtain
\begin{equation*}
    |F''(t)| \gg \,\left| h_1 + h_2 + \cdots+h_k\right| \, W^{\delta -2} >0,
\end{equation*}
where the implied constant depends on $\alpha_1,\alpha_2,\ldots,\alpha_k$. Hence, from Lemma \ref{lemma-2}, we obtain
\begin{align*}
    \left| \int_W^{2W} e\left(h_1t^{\alpha} +h_2t^{\alpha_2} +\cdots+h_kt^{\alpha_k}+ mt\right)\, dt \right| = O\left( \,\left( \left|h_1 + h_2 + \cdots + h_k\right| \right)^{-\frac{1}{2}}W^{1-\delta/2} \right)\\
    = O\left( \,\left( |h_1| + |h_2| + \cdots + |h_k| \right)^{-\frac{1}{2}}W^{1-\delta/2} \right).
\end{align*}

\noindent
Therefore, from \eqref{equation-1}, we get
\begin{equation*}
    \rS(W,2W) = O\left(\frac{\left(|h_1| + |h_2| + \cdots + |h_k| \right)\, W^{\gamma} \,\log M}{M}\right) + O\left( M\left( |h_1| + |h_2| + \cdots + |h_k| \right)^{-\frac{1}{2}}W^{1-\delta/2} \right).
\end{equation*}
Now, choosing $M= \floor{ C_2\left(  |h_1| + |h_2|+ \cdots+ |h_k|\right)^{\frac{3}{4}} W^{-\frac{1}{2}+\frac{\gamma}{2}+\frac{\delta}{4}}}$ for a sufficiently large constant $C_2$, we obtain
\begin{align*}
    \rS(W,2W) = O \left( \left( |h_1| + |h_2| + \cdots + |h_k| \right)^\frac{1}{4}W^{\frac{1}{2}+\frac{\gamma}{2}-\frac{\delta}{4}}  \log \left( W\left( | h_1| + |h_2| + \cdots + |h_k|\right)\right) \right).
\end{align*}
Since the interval $[1,N]$ can be split into $O(\log N)$ dyadic intervals of the type $[W,2W]$, we get
\begin{equation*}
    \rS(N, \mathbf{h}, \pmb\alpha)= O\left(\left( |h_1| + |h_2| + \cdots + |h_k| \right)^\frac{1}{4}N^{\frac{1}{2}+\frac{\gamma}{2}-\frac{\delta}{4}}  \log \left( N\left( |h_1| + |h_2| + \cdots + |h_k| \right)\right) (\log N) \right).
\end{equation*}
This proves Proposition \ref{unif-distribution}.
\end{proof}
\bigskip

We also make the following easy observation.

\begin{lemma}\label{greatest-integer-congruence}
    Let $x$ be a positive real number and $d$ be a natural number $\geq 2$. Then,
    \begin{equation*}
        \floor{x}\equiv r \bmod d \iff \frac{r}{d} \leq \left\{\frac{x}{d}\right\} < \frac{r+1}{d}.
    \end{equation*}
\end{lemma}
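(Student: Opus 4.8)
The plan is to unwind both sides of the equivalence in terms of the Euclidean division of $\floor{x}$ by $d$. Note first that since $\{x/d\}\in[0,1)$, the right-hand inequality is only consistent when $0\le r<d$, so throughout I take $r$ to be the residue lying in $\{0,1,\dots,d-1\}$; under this convention the statement ``$\floor{x}\equiv r\bmod d$'' is literally the assertion that $\floor{x}=md+r$ for some integer $m$, which is moreover nonnegative because $x>0$.

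For the forward direction, suppose $\floor{x}=md+r$ with $0\le r<d$. The defining property of the floor gives $md+r\le x<md+r+1$, and dividing by $d>0$ yields $m+\tfrac{r}{d}\le \tfrac{x}{d}<m+\tfrac{r+1}{d}$. Since $0\le \tfrac{r}{d}<\tfrac{r+1}{d}\le 1$, the integer part of $x/d$ is exactly $m$, so $\{x/d\}=\tfrac{x}{d}-m$ lies in $[\tfrac{r}{d},\tfrac{r+1}{d})$, which is the claimed inequality. Conversely, put $m=\floor{x/d}$, so $\tfrac{x}{d}=m+\{x/d\}$; multiplying the hypothesis $\tfrac{r}{d}\le\{x/d\}<\tfrac{r+1}{d}$ by $d$ gives $md+r\le x<md+r+1$, and because $m$, $d$, $r$ are integers this says precisely $\floor{x}=md+r$, i.e.\ $\floor{x}\equiv r\bmod d$.

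The argument is entirely elementary, so there is no substantive obstacle; the only points that need a moment's attention are the standing conventions that make the statement meaningful, namely that $r$ is taken in $\{0,1,\dots,d-1\}$ (otherwise the interval $[\tfrac{r}{d},\tfrac{r+1}{d})$ need not lie inside $[0,1)$ and the two sides genuinely disagree) and that $x>0$ ensures the quotient $m$ extracted from the floor is a nonnegative integer, so that no sign issues interfere with identifying integer and fractional parts.
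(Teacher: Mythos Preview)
Your proof is correct and follows essentially the same approach as the paper's: write $\floor{x}=md+r$ by Euclidean division, divide the inequality $md+r\le x<md+r+1$ through by $d$, and identify the integer and fractional parts of $x/d$; your converse is likewise the same computation run backwards. If anything, you are slightly more careful than the paper in flagging the convention $0\le r<d$ and in checking that $\floor{x/d}=m$ before reading off the fractional part.
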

\begin{proof}
Write $x = \floor{x} + \delta$. If $\floor{x}\equiv r \bmod d$, then $\floor{x} = dt + r$ for some integer $t$. Hence, 
\begin{equation*}
    \frac{x}{d} = \frac{\floor{x}}{d} + \frac{\{x\}}{d}= t + \frac{r}{d} + \frac{\delta}{d}.
\end{equation*}
Since $0\leq\delta <1$, we get $r/d \leq \{x/d\} < (r+1)/d$. Conversely, write $x = dt + a + \{x\}$, where $0\leq a  < d$ is a positive integer. Since, $r/d \leq \{x/d\} < (r+1)/d$, we clearly have $a=r$.
\end{proof}
\medskip

Recall the Erd\H{o}s-Tur\'{a}n-Koksma Theorem (see \cite[p. 116]{Kuipers}) on discrepancy of sequences. Let $P=\{\mathbf{x}_1,\mathbf{x}_2,\ldots,\mathbf{x}_N\}$ be a finite sequence in $\mathbb{R}^{k}$. For a subinterval $J\subseteq [0,1)^k$ of the form $J=[a_1,b_1)\times [a_2,b_2)\times\cdots \times [a_k,b_k)$, define
\begin{equation*}
    A(J;P):= \#\left\{ \mathbf{x}_i \in P: \mathbf{x}_i\in J\right\}.
\end{equation*}
The discrepancy $D_N$ for $P$ is defined by 
\begin{equation*} 
    D_N := \sup_{J} \left| \frac{A(J;P)}{N} - \lambda(J) \right|,
\end{equation*}
where   $J$ runs through all subintervals of $[0,1)^k$ of the above form and $\lambda$ denotes the $k$-dimensional Lebesgue measure. We now state a version of the Erd\H{o}s-Tur\'{a}n-Koksma theorem, which was independently proved by F. J. Koksma \cite{koksma} and P. Sz\"{u}zu \cite{szusz}.

\begin{theorem}[Erd\H{o}s-Tur\'{a}n-Koksma]\label{erdos-turan}
For a lattice point $\boldsymbol\ell = (l_1,\ldots,l_k)$ in $\mathbb{Z}^k$, define 
\begin{equation*}
    \|\boldsymbol\ell\|_{\infty} = \max_{1\le j \le k} |l_j| \qquad \text{ and} \qquad  r(\boldsymbol\ell) = \prod_{j=1}^{k} \max(|l_j|,1).
\end{equation*}
For $\mathbf{x},\mathbf{y} \in \mathbb{R}^k$, let $\langle \mathbf{x},\mathbf{y}\rangle$ denote the standard inner product. Let $\mathbf{x}_1,\ldots,\mathbf{x}_N$ be a finite sequence of points in $\mathbb{R}^k$. Then, for any positive integer $m$, we have \\

\begin{equation*}
    D_N \leq 2k^2 3^{k+1} \left(\frac{1}{m} + \sum_{0<\|\boldsymbol\ell\|_{\infty} \le m} \frac{1}{r(\boldsymbol\ell)} \,\left| \frac{1}{N}\sum_{n=1}^N e(\langle\boldsymbol\ell,\mathbf{x}_n\rangle)\right|\right).
\end{equation*}
\end{theorem}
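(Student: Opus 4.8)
The statement to be proved is the Erdős--Turán--Koksma inequality of Theorem \ref{erdos-turan}. The plan is to sandwich the indicator of each box between trigonometric polynomials of degree at most $m$ whose Fourier coefficients decay like the reciprocal of the frequency, so that the deviation $A(J;P)/N-\lambda(J)$ is controlled by the small constant ``mass defect'' of the approximants together with the exponential sums $\frac1N\sum_{n=1}^N e(\langle\boldsymbol\ell,\mathbf x_n\rangle)$. First I would fix an arbitrary box $J=\prod_{j=1}^k[a_j,b_j)\subseteq[0,1)^k$ and note that, since $D_N$ is the supremum over all such $J$, it suffices to bound the single quantity $\left|A(J;P)/N-\lambda(J)\right|$ uniformly in $J$, where $A(J;P)/N=\frac1N\sum_{n=1}^N\chi_J(\mathbf x_n)$ with $\chi_J(\mathbf x)=\prod_{j=1}^k\chi_{[a_j,b_j)}(x_j)$ (coordinates read modulo $1$) and $\lambda(J)=\int_{[0,1)^k}\chi_J$.

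The analytic core is one-dimensional. For each coordinate I would invoke the classical extremal majorant/minorant construction of Beurling--Selberg--Vaaler type, which produces for the arc $[a_j,b_j)\subseteq\mathbb R/\mathbb Z$ real trigonometric polynomials
\[
U_j^{\pm}(x)=\sum_{|h|\le m}\widehat{U_j^{\pm}}(h)\,e(hx)
\]
of degree $\le m$ with $U_j^-(x)\le\chi_{[a_j,b_j)}(x)\le U_j^+(x)$ for all $x$, with $U_j^+\ge0$, with constant terms $\widehat{U_j^{\pm}}(0)=(b_j-a_j)\pm\frac{1}{m+1}$, and with $|\widehat{U_j^{\pm}}(h)|\le\frac{1}{m+1}+\frac{1}{\pi|h|}$ for $1\le|h|\le m$. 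The role of the last bound is that whenever $1\le|h|\le m$ one has $\frac{1}{m+1}<\frac{1}{|h|}$, so $|\widehat{U_j^{\pm}}(h)|\le \frac{C}{\max(|h|,1)}$ with $C$ absolute, while $|\widehat{U_j^{\pm}}(0)|\le2$; this per-coordinate decay is exactly what generates the weight $1/r(\boldsymbol\ell)$ after tensoring.

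Next I would pass to $k$ dimensions by tensoring: put $T^+(\mathbf x)=\prod_{j=1}^k U_j^+(x_j)$, which majorizes $\chi_J$ since each factor is nonnegative and dominates $\chi_{[a_j,b_j)}$. Expanding $T^+$ in its Fourier series and using $\chi_J\le T^+$ pointwise gives
\[
\frac{A(J;P)}{N}-\lambda(J)\le\left(\widehat{T^+}(\mathbf 0)-\lambda(J)\right)+\sum_{0<\|\boldsymbol\ell\|_\infty\le m}\widehat{T^+}(\boldsymbol\ell)\,\frac1N\sum_{n=1}^N e(\langle\boldsymbol\ell,\mathbf x_n\rangle),
\]
where $\widehat{T^+}(\boldsymbol\ell)=\prod_j\widehat{U_j^+}(l_j)$ so that $|\widehat{T^+}(\boldsymbol\ell)|\le C^k/r(\boldsymbol\ell)$, and the mass defect $\widehat{T^+}(\mathbf 0)-\lambda(J)=\prod_j\!\big((b_j-a_j)+\tfrac1{m+1}\big)-\prod_j(b_j-a_j)$ telescopes to $O_k(1/m)$. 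A symmetric argument with a trigonometric-polynomial minorant of $\chi_J$ --- assembled from the $U_j^-$ through a telescoping inclusion--exclusion identity, which is needed precisely because a raw product of the possibly negative one-dimensional minorants need not stay below $\chi_J$ --- yields the matching lower bound. Taking absolute values in the oscillatory sum and absorbing the dimensional bookkeeping into the constant then gives the claimed inequality with the explicit factor $2k^2 3^{k+1}$.

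The main obstacle is the one-dimensional extremal construction itself: producing degree-$\le m$ trigonometric polynomials that sandwich an interval indicator while simultaneously attaining the near-optimal $L^1$ gap $\tfrac{1}{m+1}$ and the coefficient decay $\tfrac{1}{m+1}+\tfrac{1}{\pi|h|}$ is exactly the Beurling--Selberg majorant problem, and this extremal function theory is the analytic heart of the theorem. By comparison the reduction to a single box, the tensor product, and the telescoping that converts the product structure into the weight $r(\boldsymbol\ell)$ and the dimensional constant $2k^2 3^{k+1}$ are routine, the only delicate point being the careful tracking of the exponential-in-$k$ constants.
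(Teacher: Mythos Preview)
The paper does not prove Theorem~\ref{erdos-turan}; it is quoted as a known result with references to Koksma, Sz\"usz, and the monograph of Kuipers--Niederreiter, so there is no in-paper argument to compare against. Your outline via Beurling--Selberg--Vaaler extremal polynomials is the standard modern route and is correct in its broad strokes: the one-dimensional majorant/minorant with $L^1$-defect $1/(m+1)$ and coefficient decay $O(1/|h|)$, tensored across coordinates, does produce the weight $1/r(\boldsymbol\ell)$ and an $O_k(1/m)$ main-term error, and you are right to flag that the minorant in dimension $k$ must be built by telescoping rather than by a raw product of the $U_j^-$.

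One caveat on constants: the specific numerical factor $2k^2 3^{k+1}$ quoted in the paper is the one from the Kuipers--Niederreiter treatment, which proceeds via an elementary Fej\'er-kernel/Ces\`aro smoothing rather than the sharper Vaaler polynomials. Your Vaaler-based argument will certainly give a constant of the form $C^k$ for some absolute $C$, which is all the paper actually needs downstream, but if you want to land on the exact constant $2k^2 3^{k+1}$ you would have to either follow the original approximation scheme or do the bookkeeping carefully enough to show your constant is no worse. Since the theorem is cited rather than proved in the paper, this is a cosmetic point.
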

\bigskip

\section{\bf Proof of the main theorems}
\medskip

\begin{proof}[Proof of Theorem \ref{multivariable-equidistribution}] By Lemma \ref{greatest-integer-congruence}, we can write
\begin{align*}
    S(N, \mathbf{d}, \mathbf{c}, \pmb \alpha) &= \#\left\{ n\leq N: \floor{n^{\alpha_i}} \equiv c_i \bmod d_i \,\, \text{for all}\,\, 1\leq i\leq k \right\}\\
    &=\#\left\{n\leq N : \frac{c_i}{d_i} \leq \left\{\frac{n^{\alpha_i}}{d_i}\right\}< \frac{c_{i}+1}{d_{i}}\,\, \text{for all}\,\, 1\leq i \leq k \right\}.
\end{align*}
Define 
$$
\mathbf{x}_n= \left(\left\{\frac{n^{\alpha_1}}{d_1}\right\}, \left\{\frac{n^{\alpha_2}}{d_2}\right\}, \ldots, \left\{\frac{n^{\alpha_k}}{d_k}\right\}\right).
$$ 
Let $D_N$ denote the discrepancy of the sequence $P=\{\mathbf{x}_1,\mathbf{x}_2,\ldots, \mathbf{x}_N\}$. Then, 
\begin{equation}\label{discrepancy-relation}
    ND_N \geq \sup\left|\#\left\{n\leq N :  \left\{\frac{n^{\alpha_i}}{d_i} \right\} \in \bigg[\frac{c_i}{d_i}, \frac{c_i + 1}{d_i}\bigg) \text{ for all } 1\leq i\leq k\right\}  - \frac{N}{d_1d_2\ldots d_k}\right|.
\end{equation}
Applying Theorem \ref{erdos-turan}, for any $m\geq 1$,
\begin{equation*}
    ND_N \leq 2k^23^{k+1} \left( \frac{N}{m} + \sum_{0<\|\boldsymbol\ell\|_{\infty} \leq m} \frac{1}{r(\boldsymbol\ell)} \,\left| \sum_{n=1}^N e(\langle\boldsymbol\ell,\mathbf{x}_n\rangle)\right|\right),
\end{equation*}
where $\boldsymbol\ell$ runs over integer lattice points. Note that
\begin{equation*}
    e\left( \langle\boldsymbol\ell,\mathbf{x}_n\rangle \right) = e\left(\sum_{i=1}^k l_i \left\{\frac{n^{\alpha_i}}{d_i}\right\}\right) = e\left(\sum_{i=1}^{k} \frac{l_i}{d_i} \, n^{\alpha_i} \right).
\end{equation*}
Since $\alpha_i$'s are distinct in $(0,1)$, the condition in Proposition \ref{unif-distribution} is satisfied for all non-zero lattice points $\boldsymbol\ell$. Hence, we deduce that
\begin{equation}\label{discrepancy}
    ND_N = O\left(\frac{N}{m}\right) + O\left(N^{\frac{1}{2}+\frac{\gamma}{2}-\frac{\delta}{4}} \, \, \log N \, \sum_{0<\|\boldsymbol\ell\|_{\infty}\leq m} \frac{ \left(  \sum_{1\leq i\leq k} \frac{l_i}{d_i}  \right)^{\frac{1}{4}} }   {\prod_{1\leq i\leq k} \max(l_i,1)} \,\,\log \left( N \sum_{1\leq i\leq k} \frac{l_i}{d_i}   \right) \right).
\end{equation}
The inner sum can be bounded above as
\begin{align*}
    \sum_{0<\|\boldsymbol\ell\|_{\infty}\leq m} \frac{ \left(  \sum_{1\leq i\leq k} \frac{l_i}{d_i}  \right)^{\frac{1}{4}} }   {\prod_{1\leq i\leq k} \max(l_i,1)} \,\,\log \left( N \sum_{1\leq i\leq k} \frac{l_i}{d_i}   \right) & \ll \frac{1}{\min(d_i)^{1/4}} \, \sum_{\substack{0<l_1 = \|\boldsymbol\ell\|_{\infty}\leq m\\\\ l_i\leq l_1}} \frac{l_1^{-3/4} (\log Nl_1)}{\prod_{2\leq i\leq k} \max(l_i,1)}\\
    &\ll \frac{1}{\min(d_i)^{1/4}}  \sum_{0<l_1\leq m} l_1^{-3/4} (\log Nl_1) \prod_{2\leq i\leq k}\left(\sum_{l_i\leq l_1} \frac{1}{\max(1,l_i)}\right)\\
    &\ll \frac{m^{1/4} (\log N) (\log m)^{k-1} }{\min(d_i)^{1/4}},
\end{align*}
where the implied constant depends on $k$. Therefore, from \eqref{discrepancy}, we obtain
\begin{equation*}
    ND_N = O\left(\frac{N}{m}\right) + O\left(\frac{N^{\frac{1}{2}+\frac{\gamma}{2}-\frac{\delta}{4}} (\log N)^2 m^{\frac{1}{4}} (\log m)^{k-1}}{(\min(d_i))^\frac{1}{4}}  \right)
\end{equation*}
Now, choosing $m = \floor{N^{\frac{\delta}{5}-\frac{2\gamma}{5}+\frac{2}{5}}}$, we  have
\begin{equation*}
    ND_N = O\left(\frac{N^{\frac{3 + 2\gamma - \delta}{5}} (\log N)^{k+1}}{(\min(d_i))^\frac{1}{4}} \right).
\end{equation*}
Hence, from \eqref{discrepancy-relation}, we conclude Theorem \ref{multivariable-equidistribution}.
\end{proof}
\medskip

\begin{proof}[Proof of Theorem \ref{primes-simultaneous}] 
Recall that
\begin{equation*}
    \Lambda(n) = -\sum_{d\mid n} \mu(d)\log d.
\end{equation*}
We can write
\begin{align*}
    \sum_{n\leq N} \Lambda(\floor{n^{\alpha_1}} + c_1)\Lambda(\floor{n^{\alpha_2}}+c_2)&\ldots \Lambda(\floor{n^{\alpha_k}}+c_k) \\
    &= \sum_{n\leq N} \left(-\sum_{d_1 \mid \, \floor{n^{\alpha_1}}+c_1}\mu(d_1)\, \log d_1\right)\cdots \left(-\sum_{d_k \mid \, \floor{n^{\alpha_k}}+c_k}\mu(d_k)\, \log d_k\right). 
\end{align*}
Interchanging summation, the above sum equals
\begin{equation*}
     \sum_{d_1 \leq N^{\alpha_1}+c_1}\cdots\sum_{d_k \leq N^{\alpha_k}+c_k}(-\mu(d_1)\log d_1)\cdots(-\mu(d_k)\log d_k)\sum_{\substack{n \leq N \\ d_i \mid \,\floor{n^{\alpha_i}}+c_i}} 1
\end{equation*}
The inner sum is nothing but $S(N, \mathbf{d},\mathbf{c}, \pmb{\alpha})$. Thus, applying  Theorem \ref{multivariable-equidistribution}, we obtain
\begin{align*}
& \sum_{n\leq N} \Lambda(\floor{n^{\alpha_1}}+c_1) \cdots \Lambda(\floor{n^{\alpha_k}}+c_k) \\
& = N \left(\sum_{d_1 \leq N^{\alpha_1}+c_1}\frac{\mu(d_1)\, \log d_1}{d_1}\,\,\cdots\sum_{d_k \leq N^{\alpha_k}+c_k}\frac{\mu(d_k)\, \log d_k}{d_k} \right) \\
& \hspace{2cm} +O\left( \sum_{d_1 \leq N^{\alpha_1}+c_1}\cdots\sum_{d_k \leq N^{\alpha_k}+c_k} \frac{ (\log d_1)\ldots (\log d_k)}{\min(d_j)^\frac{1}{4}}N^{\frac{3+2\gamma-\delta}{5}}(\log(N)^{k+1})\right).\\
\end{align*}
It is easy to see that
\begin{align*}
    \sum_{d_1 \leq N^{\alpha_1}+c_1}\cdots\sum_{d_k \leq N^{\alpha_k}+c_k} \frac{ (\log d_1)\ldots (\log d_k)}{\min(d_j)^\frac{1}{4}} \ll N^{(\sum_{i=1}^k \alpha_i) - \frac{\delta}{4}}(\log(N))^k.
\end{align*}
Also, recall the following well known identity (see \cite[p. 596-597]{landau}),
\begin{equation*}
    -\sum_{n\leq x} \mu(n) \frac{\log n}{n} = 1 + O\left(e^{-C\sqrt{\log x}}\right)
\end{equation*}
for some $C>0$. Therefore, we get
\begin{align*}
& \sum_{n\leq N} \Lambda(\floor{n^{\alpha_1}}+c_1) \cdots \Lambda(\floor{n^{\alpha_k}}+c_k) \\
&=  N \left( 1 + O(e^{-C\sqrt{ \log(N)}})\right)  + O \left( N^{\frac{3+2\gamma}{5}-\frac{9\delta}{20}+ \sum_{i=1}^k \alpha_i }\,\,(\log(N)^{2k+1}) \right),
\end{align*}
which proves Theorem \ref{primes-simultaneous}.
\end{proof}
\bigskip

\section{\bf Applications to other arithmetic functions}
\bigskip

The equidistribution result in Theorem \ref{multivariable-equidistribution} can be applied to a variety of arithmetic functions to derive interesting consequences. One such application concerns squarefreeness of the sequence $\floor{n^{\alpha_i}}$. More precisely: Given real numbers $\alpha_1,\alpha_2,\ldots, \alpha_k$, how often are $\floor{n^{\alpha_i}}$ all squarefree numbers? Since $\mu^2(n)$ is the characteristic function for squarefree numbers, this is equivalent to evaluating the sum
\begin{equation*}
    \sum_{n\leq N} \mu^2(\floor{n^{\alpha_1}})\cdots \mu^2(\floor{n^{\alpha_k}}).
\end{equation*}
By the identity
\begin{equation*}
    \mu^2(n) = \sum_{d^2|n} \mu(d),
\end{equation*}
we can interchange the summation to obtain
\begin{align*}
    \sum_{n\leq N} \mu^2(\floor{n^{\alpha_1}})\cdots \mu^2(\floor{n^{\alpha_k}}) &= \sum_{n\leq N} \left(\sum_{d_1^2\mid \,\floor{n^{\alpha_1}}}\mu(d_1)\right)\left(\sum_{d_2^2\mid \,\floor{n^{\alpha_2}}}\mu(d_2)\right)\cdots \left(\sum_{d_k^2\mid \,\floor{n^{\alpha_k}}}\mu(d_k)\right)\\
    &=\sum_{d_1\leq N^{\alpha_1/2}} \cdots \sum_{d_k\leq N^{\alpha_k/2}} \mu(d_1)\mu(d_2)\ldots \mu(d_k) \sum_{\substack{n\leq N\\ d_i^2 \mid \, \floor{n^{\alpha_i}}}} 1.
\end{align*}
\noindent The inner sum is nothing but $S(N, \mathbf{d}, \mathbf{0}, \pmb{\alpha})$, where $\mathbf{d}=(d_1^2,d_2^2,\ldots, d_k^2)$. Now, using Theorem \ref{multivariable-equidistribution}, for $0<\delta=\alpha_1<\cdots<\alpha_k=\gamma<1$, we conclude that
\begin{align*}
    \sum_{n\leq N} \mu^2(\floor{n^{\alpha_1}})\cdots \mu^2(\floor{n^{\alpha_k}}) & = N\left( \sum_{d_1\leq N^{\alpha_1/2}} \frac{\mu(d_1)}{d_1^2}\right)\cdots \left(\sum_{d_k\leq N^{\alpha_k/2}} \frac{\mu(d_k)}{d_k^2}\right)\\ 
    &+ O\left(\sum_{d_i\leq N^{\alpha_i/2}} \frac{N^{\frac{3+2\gamma-\delta}{5}} (\log N)^{k+1}}{(\min(d_i^2))^\frac{1}{4}}\right).
\end{align*}
Since
\begin{equation*}
    \sum_{n\leq x} \frac{\mu(n)}{n^2} = \frac{1}{\zeta(2)} + O(x^{-1}),
\end{equation*}
we obtain
\begin{equation*}
    \sum_{n\leq N}  \mu^2(\floor{n^{\alpha_1}})\cdots \mu^2(\floor{n^{\alpha_k}}) = \frac{N}{\zeta(2)^k} + O \left( N^{(\sum_{i=1}^k \frac{\alpha_i}{2}) -\frac{\delta}{4} +\frac{3+2\gamma-\delta}{5}} \right),
\end{equation*}
and hence, deduce the following theorem.
\begin{theorem}\label{squarefree}
    Let $0<\delta= \alpha_1<\cdots<\alpha_k=\gamma <1$. Suppose
    $$ 
    \sum_{i=1}^k \alpha_i < \frac{4-4\gamma}{5} + \frac{9\delta}{20}.
    $$ 
    Then there are infinitely many positive integers $n$ such that $\floor{n^{\alpha_i}}$ are all squarefree and the number of such $n\leq x$ is given by
    \begin{align*}
        \sum_{n\leq N} \mu^2(\floor{n^{\alpha_1}})\cdots \mu^2(\floor{n^{\alpha_k}}) = \frac{N}{\zeta(2)^k} + O\left(N^{\max\left( 1-\frac{\delta}{2}, \, \, (\sum_{i=1}^k \frac{\alpha_i}{2})+\frac{3+2\gamma}{5}-\frac{9\delta}{20} \right)} \, (\log N)^{k+1}\right).
    \end{align*}
    
\end{theorem}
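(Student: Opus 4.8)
The plan is to unfold the squarefree indicator via the classical identity $\mu^2(n)=\sum_{d^2\mid n}\mu(d)$, expand the resulting $k$-fold product, and interchange the order of summation. This rewrites $\sum_{n\le N}\mu^2(\floor{n^{\alpha_1}})\cdots\mu^2(\floor{n^{\alpha_k}})$ as
\[
\sum_{d_1\le N^{\alpha_1/2}}\cdots\sum_{d_k\le N^{\alpha_k/2}}\mu(d_1)\cdots\mu(d_k)\,\#\bigl\{n\le N:\ d_i^2\mid\floor{n^{\alpha_i}}\text{ for all }1\le i\le k\bigr\},
\]
the truncations $d_i\le N^{\alpha_i/2}$ arising from $d_i^2\mid\floor{n^{\alpha_i}}\le N^{\alpha_i}$. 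The inner count is exactly $S(N,\mathbf d,\mathbf 0,\pmb\alpha)$ with $\mathbf d=(d_1^2,\dots,d_k^2)$, so Theorem~\ref{multivariable-equidistribution} applies verbatim. Beyond that theorem the only ingredient is this elementary repackaging; the rest of the proof is bookkeeping.

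Next I would feed in the asymptotic from Theorem~\ref{multivariable-equidistribution}. The main term factorises as $N\prod_{i=1}^{k}\sum_{d_i\le N^{\alpha_i/2}}\mu(d_i)/d_i^2$, and inserting $\sum_{n\le x}\mu(n)/n^2=\zeta(2)^{-1}+O(x^{-1})$ into each of the $k$ factors and multiplying out gives leading term $N\zeta(2)^{-k}$, with every cross term carrying a factor $O(N^{-\alpha_i/2})$; the largest of these is governed by the shortest range $\alpha_1=\delta$, so the main term contributes $N\zeta(2)^{-k}+O(N^{1-\delta/2})$.

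It remains to bound the accumulated error, namely $N^{(3+2\gamma-\delta)/5}(\log N)^{k+1}$ times $\sum_{d_1\le N^{\alpha_1/2}}\cdots\sum_{d_k\le N^{\alpha_k/2}}(\min(d_i^2))^{-1/4}$. Here $(\min(d_i^2))^{-1/4}=(\min(d_i))^{-1/2}\le\sum_{j=1}^{k}d_j^{-1/2}$, so the multiple sum splits into $k$ pieces, the $j$-th being $\bigl(\sum_{d_j\le N^{\alpha_j/2}}d_j^{-1/2}\bigr)\prod_{i\ne j}\bigl(\sum_{d_i\le N^{\alpha_i/2}}1\bigr)\ll N^{\alpha_j/4}\cdot N^{\sum_{i\ne j}\alpha_i/2}=N^{(\sum_i\alpha_i/2)-\alpha_j/4}$, which is largest when $\alpha_j=\delta$. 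Hence the total error is $O\bigl(N^{(\sum_i\alpha_i/2)-\delta/4+(3+2\gamma-\delta)/5}(\log N)^{k+1}\bigr)=O\bigl(N^{(\sum_i\alpha_i/2)+(3+2\gamma)/5-9\delta/20}(\log N)^{k+1}\bigr)$. Combining this with the $N^{1-\delta/2}$ contribution yields the stated error exponent $\max\bigl(1-\delta/2,\,(\sum_i\alpha_i/2)+(3+2\gamma)/5-9\delta/20\bigr)$, and the hypothesis $\sum_i\alpha_i<\tfrac{4-4\gamma}{5}+\tfrac{9\delta}{20}$, together with $\delta>0$, forces this exponent strictly below $1$; thus the count is asymptotic to $N/\zeta(2)^{k}$ and in particular $\to\infty$.

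There is essentially no hard step left: all oscillatory-sum and discrepancy input sits inside Proposition~\ref{unif-distribution} and Theorem~\ref{multivariable-equidistribution}. The only points needing care are (i) keeping the M\"obius truncations at $N^{\alpha_i/2}$ rather than $N^{\alpha_i}$, and (ii) the combinatorial estimate for $\sum(\min(d_i^2))^{-1/4}$, where one should avoid bounding $\min(d_i)$ from below by a single coordinate and instead split according to which coordinate realises the minimum.
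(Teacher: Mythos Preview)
Your proposal is correct and follows exactly the paper's approach: unfold $\mu^2$ via $\sum_{d^2\mid n}\mu(d)$, swap sums, recognise the inner count as $S(N,\mathbf d,\mathbf 0,\pmb\alpha)$ with $\mathbf d=(d_1^2,\dots,d_k^2)$, and invoke Theorem~\ref{multivariable-equidistribution}. You have in fact supplied more detail than the paper does---the paper asserts the bound $\sum_{d_i\le N^{\alpha_i/2}}(\min d_i^2)^{-1/4}\ll N^{(\sum_i\alpha_i/2)-\delta/4}$ without justification, and does not explicitly isolate the $N^{1-\delta/2}$ contribution from truncating the Euler products, whereas you handle both points carefully.
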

\medskip

Similar method can also be used to compute the sum of other arithmetic functions over $\floor{n^{\alpha_i}}$. For instance, denote the divisor function $d(n) = \sum_{d|n} 1$ and the sum of divisors function $\sigma(n)=\sum_{d|n} d$. Then, as a consequence of Theorem \ref{multivariable-equidistribution}, for $0<\delta= \alpha_1<\cdots<\alpha_k=\gamma <1$, we can prove the following.
\begin{equation*}
    \sum_{n\leq N} d(\floor{n^{\alpha_1}})\cdots d(\floor{n^{\alpha_k}}) =  (\alpha_1\alpha_2\cdots \alpha_k)\,N(\log N)^k + O\left( N^{\frac{3+2\gamma}{5}-\frac{9\delta}{20} + \sum_{i=1}^k\alpha_i} \,\,(\log(N))^{k+1}\right)
\end{equation*}
and 

\begin{equation*}
    \sum_{n\leq N} \sigma(\floor{n^{\alpha_1}})\cdots \sigma(\floor{n^{\alpha_k}}) = N^{1+\alpha_1+\alpha_2+\cdots+\alpha_k} \,+ \,  O\left(N^{\frac{3+2\gamma}{5}-\frac{9\delta}{20} + 2\sum_{i=1}^k \alpha_i} \,\,(\log(N))^{k+1}\right).
\end{equation*}

\bigskip

\section{\bf Concluding remarks}
\bigskip

Note that Theorem \ref{primes-simultaneous} is conditional on $\alpha_i$'s satisfying $\sum_i \alpha_i \leq \frac{9\delta}{20} - \frac{2\gamma}{5}+ \frac{2}{5}$. This seems to be a limitation of the method used in this paper. It is reasonable to conjecture that if $0<\alpha_1<\cdots<\alpha_k<1$, then for infinitely many $n$, all of $\floor{n^{\alpha_i}}$ are simultaneously primes. If one hopes to use the method in this paper, one has to prove a sharper error term in Theorem \ref{multivariable-equidistribution}.  In the light of Piatetski-Shapiro primes, it is tempting to conjecture that there exists a constant $\lambda>1$ such that whenever $0<\alpha_1<\cdots< \alpha_k<\lambda$, the numbers $\floor{n^{\alpha_i}}$ are simultaneously primes for infinitely many $n$. We relegate these questions to future investigation.

\bigskip

\section*{\bf Acknowledgments}
We are grateful to Prof. Ram Murty for introducing the theme of uniform distribution, sharing \cite{ram} and for several fruitful discussions. 
The second author would like to thank the Institute of Mathematical Sciences for hospitality during the summer research program in 2025.

\end{document}